\newtheoremstyle{lemma}{.5\baselineskip\@plus.2\baselineskip\@minus.2\baselineskip}{.5\baselineskip\@plus.2\baselineskip\@minus.2\baselineskip}
	{\itshape}
	{}
	{\bfseries}
	{.}
	{\newline}
	{\thmname{#1}\thmnumber{ #2}\thmnote{ (#3)}}	
\theoremstyle{lemma}
	\newtheorem{theorem}{Theorem}
	\newtheorem{lemma}[theorem]{Lemma}  
	\newtheorem{corollary}[theorem]{Corollary}  
\newtheoremstyle{definition}{.5\baselineskip\@plus.2\baselineskip\@minus.2\baselineskip}{.5\baselineskip\@plus.2\baselineskip\@minus.2\baselineskip}
	{}
	{}
	{\bfseries}
	{.}
	{\newline}
	{\thmname{#1}\thmnumber{ #2}\thmnote{ (#3)}}	
\theoremstyle{definition}
\def\XXint#1#2#3{{\setbox0=\hbox{$#1{#2#3}{\int}$} 
\vcenter{\hbox{$#2#3$}}\kern-.5\wd0}}
\newcommand\avsuminner[2]{%
  {\sbox0{$\m@th#1\sum$}%
   \vphantom{\usebox0}%
   \ooalign{%
     \hidewidth
     \smash{\vrule height\dimexpr\ht0+1pt\relax depth\dimexpr\dp0+1pt\relax}%
     \hidewidth\cr
     $\m@th#1\sum$\cr
   }%
  }%
}
\newcommand{\N}{\ensuremath{\mathbb{N}}}
\newcommand{\R}{\ensuremath{\mathbb{R}}}
\newcommand{\HM}{\ensuremath{\mathcal{H}}}
\DeclarePairedDelimiter\abs{\lvert}{\rvert}
\DeclarePairedDelimiter\norm{\lVert}{\rVert}
\newcommand{\dd}{\ensuremath{\mathrm{d}}}
\renewcommand{\phi}{\varphi}
\renewcommand{\epsilon}{\varepsilon}
\begin{document}

\title{Comparing maximal mean values on different scales}
\author{\href{mailto:thomas.havenith@deutschebahn.com}{Thomas Havenith}}
\address{\flushleft Thomas Havenith, Infrastrukturplanung (I.ETZ 1), DB Energie GmbH, Pfarrer-Perabo-Platz 2, 60326 Frankfurt am Main, Germany}
\email{\href{mailto:thomas.havenith@deutschebahn.com}{thomas.havenith@deutschebahn.com}}
\urladdr{\href{http://www.dbenergie.de/}{http://www.dbenergie.de/}}
\author{\href{mailto:sebastian.scholtes@rwth-aachen.de}{Sebastian Scholtes}}
\address{\flushleft Sebastian Scholtes, Institut f\"ur Mathematik, RWTH Aachen University, Templergraben 55, 52062 Aachen, Germany}
\email{\href{mailto:sebastian.scholtes@rwth-aachen.de}{sebastian.scholtes@rwth-aachen.de}}
\urladdr{\href{http://www.instmath.rwth-aachen.de/~scholtes/home/}{http://www.instmath.rwth-aachen.de/~scholtes/home/}}
\date{\today}
\keywords{mean value, arithmetic mean, norm, Lp norm} 
\subjclass[2010]{46B25; 62P30, 46A45, 47A30} 
\begin{abstract}
	When computing the average speed of a car over different time periods from given GPS data, it is conventional wisdom that the maximal average speed over all time intervals of fixed length 
	decreases if the interval length increases. However, this intuition is wrong. This can be easily seen by the example in Figure \ref{counterexample} for the two
	time periods $T<S$. We investigate this phenomenon and make rigorous in which sense this intuition is still true.
\end{abstract}
\maketitle

\begin{figure}[h]
		\centering 
		\includegraphics[width=0.5\textwidth]{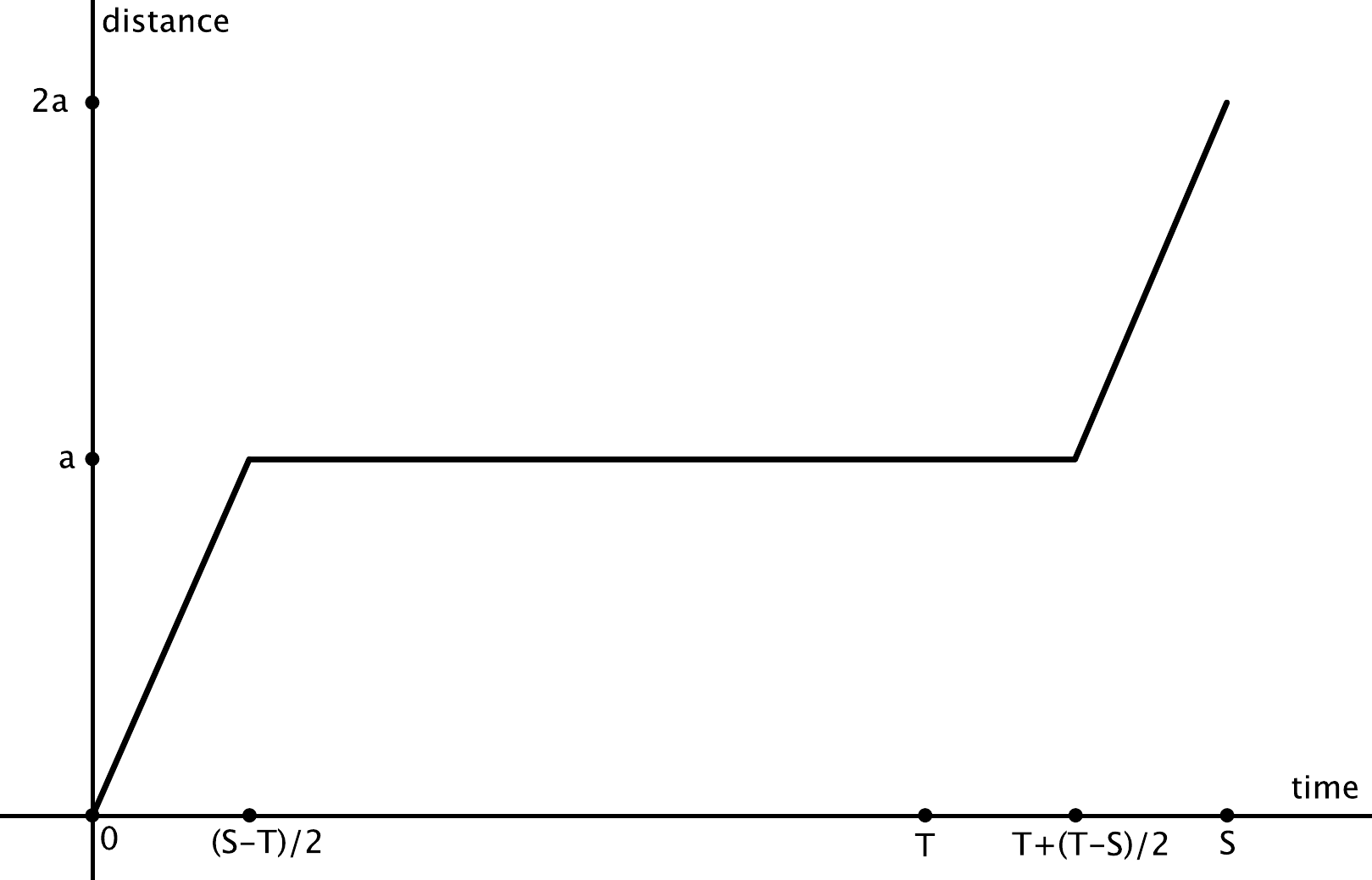}
		\caption{Maximal average speeds for larger time periods are not necessarily smaller.}
		\label{counterexample}
	\end{figure}

In many applications, rates
are measured as average velocities; for instance the speed monitored by Speed Check Services SPECS in the UK, 
capacities of rides in amusement parks
and the drift velocity of electrons are obtained in this way \cite{Smith2012a,Ahmadi1997a,Griffiths1999a}. 
In other fields mean values over different time intervals and their relation are important:
\begin{itemize}
	\item
		An athlete has a high heart rate during a competition and maybe even a very high heart rate for a small time during this contest, but having a similar heart rate for a whole week
		would result in serious health problems.
	\item
		While approximately 10 percent of people that are exposed to a radiation dose of 1 Sv over a short time period die from light radiation poisoning
		in the first 30 days \cite[p.400]{Harper2011a}, this is also the maximal dose allowed for NASA astronauts over their career, \cite[p.7]{NASA2008a}.
	\item
		Certain materials, for example a power line, can endure fairly high stresses, currents etc. for a small time interval, but the average load on a larger time scale has to be much smaller. Another illustration for the effect of the time scale is the phenomenon of creep \cite{Meyers1999a}.
\end{itemize}

All these examples show that one expects maximal mean values to become smaller as the interval size decreases. We study mean values (in the discrete as well as in the continuous setting) 
with regard to this property and find that it is not true in general (Lemmata \ref{normsordered} and \ref{nonordered}), but that it is true if one compares the maximal mean value over all regions of size $T$
with those taken on regions of size $d\cdot T$ for $d\in \N$ (Lemmata \ref{normsordered} and \ref{normsordered2}).

\section{Discrete setting}

Let $\ell^{\infty}$ be the space of bounded real sequences. For $x\in \ell^{\infty}$, $p\in (0,\infty)$ and $n\in\N$ we define
\begin{align*}
	\norm{x}_{\infty,p,n}\vcentcolon =\sup_{j\in\N}\Big(\frac{1}{n}\sum_{i=j}^{j+(n-1)}\abs{x_{i}}^{p}\Big)^{\frac{1}{p}}.
\end{align*}

\begin{lemma}[For $p\in [1,\infty)$ the mapping $\norm{\cdot}_{\infty,p,n}$ is an equivalent norm to $\norm{\cdot}_{\infty}$]
	Let $p\in [1,\infty)$, $n\in\N$. Then $\norm{\cdot}_{\infty,p,n}$ is a norm on $\ell^{\infty}$ with
	\begin{align*}
		n^{-\frac{1}{p}}\norm{x}_{\infty}\leq \norm{x}_{\infty,p,n}\leq \norm{x}_{\infty}.
	\end{align*}
\end{lemma}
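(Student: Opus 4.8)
The plan is to establish the two-sided estimate first and then read off the norm axioms from it, using a windowwise application of Minkowski's inequality for the only nontrivial point. For the right-hand inequality, note that for every $j\in\N$ and every $i$ with $j\le i\le j+(n-1)$ one has $\abs{x_i}^p\le\norm{x}_\infty^p$, so the arithmetic mean of these $n$ numbers is at most $\norm{x}_\infty^p$; taking $p$-th roots and then the supremum over $j$ gives $\norm{x}_{\infty,p,n}\le\norm{x}_\infty$. In particular $\norm{x}_{\infty,p,n}<\infty$, so the quantity is well-defined on $\ell^\infty$. For the left-hand inequality I would use the reverse observation that a single summand controls the whole sum: for fixed $j$,
\[
\abs{x_j}^p\le\sum_{i=j}^{j+(n-1)}\abs{x_i}^p=n\cdot\frac{1}{n}\sum_{i=j}^{j+(n-1)}\abs{x_i}^p\le n\,\norm{x}_{\infty,p,n}^p,
\]
hence $\abs{x_j}\le n^{1/p}\norm{x}_{\infty,p,n}$ for all $j$, and taking the supremum over $j$ yields $\norm{x}_\infty\le n^{1/p}\norm{x}_{\infty,p,n}$, i.e.\ the claimed lower bound.

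It remains to verify the norm axioms. Non-negativity is immediate, and $\norm{0}_{\infty,p,n}=0$; conversely, if $\norm{x}_{\infty,p,n}=0$ then the lower bound just proved forces $\norm{x}_\infty=0$, so $x=0$, giving positive definiteness. Absolute homogeneity follows by factoring $\abs{\lambda}^p$ out of each of the finite sums and then out of the supremum. For the triangle inequality, fix $j\in\N$ and apply Minkowski's inequality in $\R^n$ — valid precisely because $p\ge1$ — to the vectors $(x_i)_{i=j}^{j+(n-1)}$ and $(y_i)_{i=j}^{j+(n-1)}$, which after dividing by $n^{1/p}$ reads
\[
\Big(\frac{1}{n}\sum_{i=j}^{j+(n-1)}\abs{x_i+y_i}^p\Big)^{\frac1p}\le\Big(\frac{1}{n}\sum_{i=j}^{j+(n-1)}\abs{x_i}^p\Big)^{\frac1p}+\Big(\frac{1}{n}\sum_{i=j}^{j+(n-1)}\abs{y_i}^p\Big)^{\frac1p}\le\norm{x}_{\infty,p,n}+\norm{y}_{\infty,p,n}.
\]
Since the right-hand side does not depend on $j$, taking the supremum over $j$ on the left gives $\norm{x+y}_{\infty,p,n}\le\norm{x}_{\infty,p,n}+\norm{y}_{\infty,p,n}$.

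The argument is essentially routine; the only genuine input is Minkowski's inequality, and the restriction $p\ge1$ is exactly what makes it available — for $p<1$ the triangle inequality, and hence the statement, breaks down. The one step worth stating explicitly rather than leaving implicit is the passage from the windowwise estimate to the inequality between suprema, which uses nothing more than the definition of $\norm{\cdot}_{\infty,p,n}$ and the monotonicity of $\sup$.
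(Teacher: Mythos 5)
Your proof is correct and follows essentially the same route as the paper's: the upper bound by bounding each summand by $\norm{x}_\infty^p$, the lower bound by retaining a single summand, and the triangle inequality via Minkowski's inequality in Euclidean $n$-space applied windowwise. The only (harmless) difference is that you derive positive definiteness from the lower bound, whereas the paper treats it as immediate.
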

\begin{proof}
	Clearly $\norm{\cdot}_{\infty,p,n}$ is positively homogenous of degree one and $\norm{x}_{\infty,p,n}\geq 0$ with equality if and only if $x=0$. From the triangle inequality of the
	$p$-norm in Euclidean $n$-space we obtain the triangle inequality for $\norm{\cdot}_{\infty,p,n}$. Additionally, we obtain
	\begin{align*}
		n^{-\frac{1}{p}}\norm{x}_{\infty}=\Big(\frac{1}{n}\norm{x}_{\infty}^{p}\Big)^{\frac{1}{p}}
		\leq \norm{x}_{\infty,p,n} =\sup_{j\in\N}\Big(\frac{1}{n}\sum_{i=j}^{j+(n-1)}\abs{x_{i}}^{p}\Big)^{\frac{1}{p}}
		\leq \Big(\frac{1}{n}n\norm{x}_{\infty}^{p}\Big)^{\frac{1}{p}}=\norm{x}_{\infty}.
	\end{align*}
\end{proof}

This means that the spaces $(\ell^{\infty},\norm{\cdot}_{\infty,p,n})$ are non-reflexive Banach spaces with the same topology as the standard $\ell^{\infty}$.

\begin{lemma}[Inequality for norms]\label{inequality}
	Let $\alpha_{i}\in\N$ with $n=\sum_{l=1}^{d}\alpha_{l}$. Then for all $p\in [1,\infty)$ and $x\in\ell^{\infty}$ we have that 
	\begin{align*}
		\norm{x}_{\infty,p,n}^{p} \leq \sum_{l=1}^{d}\frac{\alpha_{l}}{n}\norm{x}_{\infty,p,\alpha_{l}}^{p}.
	\end{align*}
\end{lemma}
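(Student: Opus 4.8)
The plan is to fix an arbitrary starting index $j\in\N$ and to split the length-$n$ window $\{j,j+1,\dots,j+(n-1)\}$ into $d$ consecutive sub-windows of lengths $\alpha_1,\dots,\alpha_d$. Concretely, set $\beta_l\vcentcolon= j+\sum_{m=1}^{l-1}\alpha_m$ for $l=1,\dots,d$, so that $\beta_1=j$ and the blocks $\{\beta_l,\dots,\beta_l+(\alpha_l-1)\}$ form a partition of $\{j,\dots,j+(n-1)\}$ because $\sum_{l=1}^d\alpha_l=n$. This bookkeeping of the offsets $\beta_l$ is the only place where a little care is needed; everything else is elementary.

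First I would rewrite the mean over the big window as a weighted average of the means over the sub-windows:
\[
	\frac{1}{n}\sum_{i=j}^{j+(n-1)}\abs{x_i}^p=\sum_{l=1}^d\frac{\alpha_l}{n}\cdot\frac{1}{\alpha_l}\sum_{i=\beta_l}^{\beta_l+(\alpha_l-1)}\abs{x_i}^p.
\]
For each $l$, the inner average is bounded by $\norm{x}_{\infty,p,\alpha_l}^p$ directly from the definition of the supremum (apply it with the index $\beta_l$). Inserting these bounds with the weights $\alpha_l/n$ yields
\[
	\frac{1}{n}\sum_{i=j}^{j+(n-1)}\abs{x_i}^p\leq\sum_{l=1}^d\frac{\alpha_l}{n}\norm{x}_{\infty,p,\alpha_l}^p,
\]
and the right-hand side no longer depends on $j$.

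Finally I would take the supremum over $j\in\N$ on the left. Since $t\mapsto t^p$ is increasing on $[0,\infty)$, one has $\norm{x}_{\infty,p,n}^p=\sup_{j\in\N}\frac{1}{n}\sum_{i=j}^{j+(n-1)}\abs{x_i}^p$, so the claimed inequality follows at once. I do not expect a genuine obstacle here: the main point is simply to pass to the $p$-th power before splitting the window, and to line up the partition correctly. It is worth noting that the argument never uses $p\geq 1$, only $p>0$, so the estimate in fact holds for all $p\in(0,\infty)$.
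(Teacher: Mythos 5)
Your proof is correct and follows essentially the same route as the paper: partition the length-$n$ window into consecutive blocks of lengths $\alpha_1,\dots,\alpha_d$, bound each block average by $\norm{x}_{\infty,p,\alpha_l}^{p}$, and pass to the supremum over $j$ (the paper does this last step via an $\epsilon$-near-optimal $j$ rather than taking the supremum at the end, which is an immaterial difference). Your observation that only $p>0$ is needed is also accurate.
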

\begin{proof}
	Let $j\in\N$, $\epsilon>0$ such that $\norm{x}_{\infty,p,n}^{p}-\epsilon\leq \frac{1}{n}\sum_{i=j}^{j+(n-1)}x_{i}^{p}$. Then
	\begin{align*}
		\norm{x}_{\infty,p,n}^{p}-\epsilon\leq\frac{1}{n}\sum_{i=j}^{j+(n-1)}x_{i}^{p}
		=\frac{1}{n}\sum_{l=1}^{d}\sum_{i=j+\sum_{k=1}^{l-1}\alpha_{k}}^{j+\sum_{k=1}^{l-1}\alpha_{k}+(\alpha_{l}-1)}x_{i}^{p}
		\leq \frac{1}{n}\sum_{l=1}^{d} \alpha_{l} \norm{x}_{\infty,p,\alpha_{l}}^{p}.
	\end{align*}
\end{proof}

\begin{lemma}[Norms $\norm{\cdot}_{\infty,p,dn}$ of multiples of $n$ are ordered]\label{normsordered}
	Let $m,n \in\N$ and $n\leq m$. 
	\begin{itemize}
		\item
			If $n$ is a factor of $m$, i.e. $m=dn$ for $d\in \N$ then
			\begin{align*}
				\norm{x}_{\infty,p,dn}=\norm{x}_{\infty,p,m}\leq \norm{x}_{\infty,p,n}\quad\text{for all }x\in\ell^{\infty}.
			\end{align*}
		\item
			If $n$ is no factor of $m$ then there are $x,y\in\ell^{\infty}$ such that
			\begin{align*}
				\norm{x}_{\infty,p,n}< \norm{x}_{\infty,p,m}\quad\text{and}\quad
				\norm{y}_{\infty,p,m}< \norm{y}_{\infty,p,n}.
			\end{align*}
	\end{itemize}
\end{lemma}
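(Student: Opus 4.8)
For the first part, when $m = dn$: We need $\|x\|_{\infty,p,m} \leq \|x\|_{\infty,p,n}$. This follows directly from Lemma \ref{inequality} by taking all $\alpha_l = n$ (so $d$ of them, summing to $dn = m$). Then Lemma \ref{inequality} gives $\|x\|_{\infty,p,m}^p \leq \sum_{l=1}^d \frac{n}{m}\|x\|_{\infty,p,n}^p = \frac{dn}{m}\|x\|_{\infty,p,n}^p = \|x\|_{\infty,p,n}^p$. Taking $p$-th roots finishes it. Note the equality $\|x\|_{\infty,p,dn} = \|x\|_{\infty,p,m}$ is just the hypothesis $m = dn$ rewritten.

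For the second part, when $n$ does not divide $m$: I need to construct two counterexamples.

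For $y$ with $\|y\|_{\infty,p,m} < \|y\|_{\infty,p,n}$: Take $y$ to be a "block" sequence that is $1$ on a window of length $n$ and $0$ elsewhere — say $y_i = 1$ for $i \in \{0,\dots,n-1\}$ and $y_i = 0$ otherwise. Then $\|y\|_{\infty,p,n} = 1$ (achieved on that window), while any window of length $m > n$ contains at most $n$ nonzero entries, so $\|y\|_{\infty,p,m}^p \leq n/m < 1$.

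For $x$ with $\|x\|_{\infty,p,n} < \|x\|_{\infty,p,m}$: This is the subtler construction. Write $m = qn + r$ with $0 < r < n$ (here $q \geq 1$ since $n \leq m$, actually $q\ge 1$; if $n > m$ the roles would differ but we assume $n \le m$). The idea is to make a periodic pattern where $x$ is $1$ on blocks and $0$ on blocks, with the period chosen so that an $m$-window can be aligned to capture a higher density of $1$'s than any $n$-window can. A clean choice: let $x$ have period $n+1$, being $1$ on $n$ consecutive indices and $0$ on the single remaining index, i.e. $x_i = 0$ iff $i \equiv n \pmod{n+1}$. Then every window of length $n$ contains at least one zero (since any $n$ consecutive integers hit every residue class mod $n+1$ except one, but must include the "hole" residue unless... actually $n$ consecutive integers mod $n+1$ miss exactly one residue, so we can have a window avoiding the hole only if $n$ consecutive integers can skip residue $n$ — they cannot, any $n$ consecutive values mod $n+1$ omit exactly one residue class, which can be arranged to be $n$). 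Hmm, so $\|x\|_{\infty,p,n} = 1$ after all with this pattern.

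I should instead make the zeros denser. Let me reconsider: I want $\|x\|_{\infty,p,n}$ strictly less than $1$ but $\|x\|_{\infty,p,m}$ closer to (or equal to) $1$, or more precisely just strictly larger. Try: $x$ periodic with period $p_0$ where $\gcd$-type reasoning forces every $n$-window to contain a controlled number $k_n$ of zeros and some $m$-window to contain only $k_m$ zeros with $(n - k_n)/n < (m-k_m)/m$. Concretely, since $n \nmid m$, I will choose the period to be $n$ and place a single zero per period; then an $n$-window always catches exactly one zero giving density $(n-1)/n$, whereas an $m = qn+r$ window with $0<r<n$ catches either $q$ or $q+1$ zeros depending on alignment; aligning to catch only $q$ zeros gives density $(m-q)/m = (qn+r-q)/(qn+r)$, and one checks $(qn+r-q)/(qn+r) > (n-1)/n \iff n(qn+r-q) > (n-1)(qn+r) \iff nqn + nr - nq > nqn + nr - qn - r \iff 0 > -r$, which holds. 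So $\|x\|_{\infty,p,m}^p = (m-q)/m > (n-1)/n = \|x\|_{\infty,p,n}^p$, giving the strict inequality.

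The main obstacle will be the bookkeeping in this last construction — verifying that an $m$-window really can be aligned to contain only $\lfloor m/n \rfloor = q$ of the periodically-placed zeros (one must check $r$ consecutive "extra" slots can avoid a zero, which works precisely because $r < n$), and confirming the supremum over all $n$-windows is exactly $(n-1)/n$ (every $n$-window of a period-$n$ sequence with one zero per period contains exactly one zero). I would state these two elementary counting facts as the crux, then the inequality $(m-q)/m > (n-1)/n$ reduces to $r > 0$, i.e. to $n \nmid m$, which is exactly our hypothesis.
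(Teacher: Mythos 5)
Your proof is correct and follows essentially the same route as the paper: part (i) is the identical application of Lemma \ref{inequality} with all $\alpha_{l}=n$, and part (ii) rests on explicit periodic counterexamples with window counting. The only cosmetic difference is that the paper uses the sparse sequence (one $1$ per period of length $n$) for both directions, obtaining the reverse inequality by interchanging the roles of $m$ and $n$, whereas you use its complement (one $0$ per period) for one direction and a single block of $n$ ones for the other; your counting checks out, including the alignment of an $m$-window so that it meets only $\lfloor m/n\rfloor$ zeros.
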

\begin{proof}
	If we choose all $\alpha_{l}=n$ in Lemma \ref{inequality} we immediately obtain the first item. For the second item 
	define $x(n)\in\ell^{\infty}$ by $x(n)_{i}=1$ if $i\in\{(d-1)n+1\mid d\in\N\}$ and $x(n)_{i}=0$ otherwise. 
	Then
	\begin{align*}
		\norm{x(n)}_{\infty,p,n}^{p}=\frac{1}{n}<\frac{d}{dn-1}\leq \frac{d}{(d-1)n+j} =\norm{x(n)}_{\infty,p,(d-1)n+j}^{p}
	\end{align*}
	for $j=1,\ldots,n-1$ and $d\in\N$. Note that the natural numbers of the form $m=(d-1)n+j$ for $j=1,\ldots,n-1$, $d\in\N$ are exactly those which are either smaller than $n$
	or such that $n$ is no factor of $m$. This proves the proposition as we obtain the opposite inequality by interchanging the roles of $m$ and $n$.
\end{proof}

In the following Lemma we show that counterexamples, however, cannot be too bad.

\begin{lemma}[Counterexamples cannot be too bad]
	Let $p\in [1,\infty)$, $n<m$. Then for all $x\in \ell^{\infty}$ we have that
	\begin{align*}
		\norm{x}_{\infty,p,m}^{p}\leq \frac{(\lfloor\frac{m}{n}\rfloor+1)n}{m}\norm{x}_{\infty,p,n}^{p}\leq 2\norm{x}_{\infty,p,n}^{p}.
	\end{align*}
\end{lemma}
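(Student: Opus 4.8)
The plan is to reduce the estimate to the factor case already handled in Lemma~\ref{normsordered}, via a one-line monotonicity remark about the role of the window length.

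First I would record the following elementary fact: if $k,N\in\N$ with $k\le N$, then
\begin{align*}
	\norm{x}_{\infty,p,k}^{p}\leq \frac{N}{k}\,\norm{x}_{\infty,p,N}^{p}\qquad\text{for all }x\in\ell^{\infty}.
\end{align*}
Indeed, every block $\{j,\ldots,j+(k-1)\}$ is contained in the block $\{j,\ldots,j+(N-1)\}$, and since all the summands $\abs{x_{i}}^{p}$ are nonnegative, enlarging the window can only increase the sum; hence $\frac{1}{k}\sum_{i=j}^{j+(k-1)}\abs{x_{i}}^{p}\leq \frac{N}{k}\cdot\frac{1}{N}\sum_{i=j}^{j+(N-1)}\abs{x_{i}}^{p}$, and taking the supremum over $j\in\N$ yields the claim.

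Next I would set $d=\lfloor\frac{m}{n}\rfloor$, so that $dn\leq m<(d+1)n$, and in particular $m\leq (d+1)n$. Applying the fact just recorded with $k=m$ and $N=(d+1)n$ gives $\norm{x}_{\infty,p,m}^{p}\leq \frac{(d+1)n}{m}\,\norm{x}_{\infty,p,(d+1)n}^{p}$. Since $(d+1)n$ is a multiple of $n$, the factor case of Lemma~\ref{normsordered} shows $\norm{x}_{\infty,p,(d+1)n}^{p}\leq \norm{x}_{\infty,p,n}^{p}$, and chaining the two inequalities produces the first asserted bound.

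For the second inequality it only remains to observe that $\frac{(d+1)n}{m}\leq 2$: from $d\leq \frac{m}{n}$ we get $dn\leq m$, so $(d+1)n=dn+n\leq m+n$, and since $n<m$ this is at most $2m$. I do not expect a genuine obstacle here; the only points needing a little care are applying the window-enlargement estimate in the correct direction and keeping straight which of the inequalities relating $\lfloor\frac{m}{n}\rfloor$, $m$ and $n$ are strict. (Alternatively, when $n$ is not a factor of $m$, one could feed $\alpha_{1}=\dots=\alpha_{d}=n$ and $\alpha_{d+1}=m-dn$ into Lemma~\ref{inequality} and bound the remainder term $\norm{x}_{\infty,p,m-dn}^{p}$ by $\frac{n}{m-dn}\norm{x}_{\infty,p,n}^{p}$ using the same window-enlargement remark; this gives the identical constant.)
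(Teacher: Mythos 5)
Your proposal is correct and follows essentially the same route as the paper: enlarge the window from length $m$ to $(d+1)n$ with $d=\lfloor\frac{m}{n}\rfloor$, pick up the factor $\frac{(d+1)n}{m}$, and then invoke the factor case of Lemma~\ref{normsordered}. Isolating the window-enlargement step as a separate monotonicity fact and explicitly checking $\frac{(d+1)n}{m}\leq 2$ are only cosmetic refinements of the paper's argument.
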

\begin{proof}
	Choose $d\vcentcolon=\lfloor\frac{m}{n}\rfloor$ and $\epsilon>0$. Then there is $j\in\N$ such that
	\begin{align*}
		\MoveEqLeft
		\norm{x}_{\infty,p,m}^{p}-\epsilon\leq \frac{1}{m}\sum_{i=j}^{j+(m-1)}\abs{x_{i}}^{p}
		\leq \frac{1}{m}\sum_{i=j}^{j+((d+1)n-1)}\abs{x_{i}}^{p}\\
		&\leq \frac{(d+1)n}{m}\norm{x}_{\infty,p,(d+1)n}^{p}
		\leq \frac{(d+1)n}{m}\norm{x}_{\infty,p,n}^{p}
	\end{align*}
	according to Lemma \ref{normsordered}.
\end{proof}

\section{Continuous setting}

Let $f:\R^{N}\to\R$ be a function. For $V>0$, $p\in (0,\infty)$ we define
\begin{align*}
	\norm{f}_{p,V}\vcentcolon=\sup_{\substack{\Omega\subset \R^{N}\text{ open}\\\Omega\text{ convex}\\\abs{\Omega}=V}}\Big(\frac{1}{\abs{\Omega}}\int_{\Omega}\abs{f(x)}^{p}\,\dd x\Big)^{\frac{1}{p}}.
\end{align*}
Note that this norm bears a resemblance to the Hardy-Littlewood maximal function from harmonic analysis.
If we denote the space of all functions $f:\R^{N}\to \R$ with $f=0$ a.e. by $\mathcal{N}$ we can define
\begin{align*}
	L_{V}^{p}\vcentcolon=\{f:\R^{N}\to \R\mid \norm{f}_{p,V}<\infty\}/\mathcal{N}.
\end{align*}
By a slight abuse of notation we also want to consider $\norm{\cdot}_{p,V}$ as a mapping from $L_{V}^{p}$ to $\R$ in the obvious manner.

\begin{lemma}[For $p\in [1,\infty)$ the mapping $\norm{\cdot}_{p,V}$ is a norm on $L_{V}^{p}$]
	Let $p\in [1,\infty)$, $V>0$. Then $\norm{\cdot}_{p,V}$ is a norm on $L_{V}^{p}$.
\end{lemma}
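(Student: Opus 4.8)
The plan is to check the norm axioms one at a time, transcribing the argument of the discrete lemma to the normalised measure $\abs{\Omega}^{-1}\,\dd x$. Non-negativity $\norm{f}_{p,V}\geq 0$ is immediate, and positive homogeneity $\norm{\lambda f}_{p,V}=\abs{\lambda}\,\norm{f}_{p,V}$ follows by pulling $\abs{\lambda}^{p}$ out of every integral and hence out of the supremum. At this stage I would also record two preliminary observations: the index set is nonempty for every $V>0$ (there is an open ball of volume $V$), so the supremum is taken over a nonempty family; and the value $\norm{f}_{p,V}$ does not depend on the chosen representative of a class in $L_{V}^{p}$, since modifying $f$ on a Lebesgue null set leaves each $\int_{\Omega}\abs{f}^{p}\,\dd x$ unchanged.

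For the triangle inequality, fix an admissible $\Omega$ (open, convex, $\abs{\Omega}=V$) and apply Minkowski's inequality on the probability space $(\Omega,\abs{\Omega}^{-1}\,\dd x)$:
\begin{align*}
	\Big(\frac{1}{\abs{\Omega}}\int_{\Omega}\abs{f+g}^{p}\,\dd x\Big)^{\frac{1}{p}}
	\leq \Big(\frac{1}{\abs{\Omega}}\int_{\Omega}\abs{f}^{p}\,\dd x\Big)^{\frac{1}{p}}
	+\Big(\frac{1}{\abs{\Omega}}\int_{\Omega}\abs{g}^{p}\,\dd x\Big)^{\frac{1}{p}}
	\leq \norm{f}_{p,V}+\norm{g}_{p,V}.
\end{align*}
Taking the supremum over all such $\Omega$ on the left gives $\norm{f+g}_{p,V}\leq \norm{f}_{p,V}+\norm{g}_{p,V}$; the same chain of inequalities shows that $L_{V}^{p}$ is closed under addition and, together with homogeneity, is a vector space, so that the assertion is meaningful.

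It remains to prove definiteness. If $f=0$ a.e. then every integral vanishes and $\norm{f}_{p,V}=0$. Conversely, assume $\norm{f}_{p,V}=0$. Then $\int_{\Omega}\abs{f}^{p}\,\dd x=0$, hence $f=0$ a.e. on $\Omega$, for every admissible $\Omega$. The one place that needs slightly more than a computation is to promote this to $f=0$ a.e. on all of $\R^{N}$: let $r>0$ be the radius with $\abs{B_{r}(0)}=V$ and consider the countable family of balls $B_{r}(q)$, $q\in\Q^{N}$. Since $\Q^{N}$ is dense and $r>0$ is fixed, these balls cover $\R^{N}$, each is admissible, and $f=0$ a.e. on each; hence $f=0$ a.e. on $\R^{N}$, i.e. $f=0$ in $L_{V}^{p}$. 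I expect this covering step to be the main obstacle, and it is a mild one; the rest is the direct analogue of the $\ell^{p}$ argument used for $\norm{\cdot}_{\infty,p,n}$.
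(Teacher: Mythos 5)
Your proof is correct and follows the same route as the paper: homogeneity and non-negativity are immediate, the triangle inequality comes from Minkowski's inequality on each fixed $\Omega$ followed by taking the supremum, and definiteness reduces to $f=0$ a.e. The paper states the definiteness claim without justification, whereas you supply the missing (and worthwhile) detail via the countable cover of $\R^{N}$ by admissible balls $B_{r}(q)$, $q\in\Q^{N}$; otherwise the two arguments coincide.
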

\begin{proof}
	Clearly $\norm{\cdot}_{p,V}$ is positively homogenous of degree one and $\norm{f}_{p,V}\geq 0$ with equality if and only if $f=0$ a.e.. 
	Now, the triangle inequality for $L^{p}$ proves the triangle inequality for $\norm{\cdot}_{p,V}$.
\end{proof}

\begin{lemma}[Inequality for norms]\label{inequality2}
	Let $\alpha_{i}>0$ with $V=\sum_{l=1}^{d}\alpha_{l}$. Then for all $p\in [1,\infty)$, $V>0$ and $f\in L_{V}^{p}$ holds
	\begin{align*}
		\norm{f}_{p,V}^{p} \leq \sum_{l=1}^{d}\frac{\alpha_{l}}{V}\norm{f}_{p,\alpha_{l}}^{p}.
	\end{align*}
\end{lemma}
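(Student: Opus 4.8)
The plan is to run the proof of the discrete Lemma \ref{inequality} in the continuous setting, with the decomposition of a block of $n$ consecutive indices into $d$ consecutive sub-blocks replaced by the geometric statement that \emph{every open convex set $\Omega\subset\R^N$ with $0<\abs{\Omega}=V<\infty$ can be partitioned, up to a Lebesgue null set, into pairwise disjoint open convex sets $\Omega_1,\ldots,\Omega_d$ with $\abs{\Omega_l}=\alpha_l$ for $l=1,\ldots,d$.} Granting this, fix $\epsilon>0$; since $f\in L_V^p$ the supremum defining $\norm{f}_{p,V}$ is finite, so we may pick an open convex competitor $\Omega$ with $\abs{\Omega}=V$ and $\norm{f}_{p,V}^p-\epsilon\leq\frac1V\int_\Omega\abs{f}^p$. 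Decomposing this $\Omega$ and noting that each $\Omega_l$ is itself an admissible competitor for $\norm{f}_{p,\alpha_l}$, we obtain
\begin{align*}
	\norm{f}_{p,V}^p-\epsilon\leq\frac1V\int_\Omega\abs{f}^p=\sum_{l=1}^d\frac1V\int_{\Omega_l}\abs{f}^p=\sum_{l=1}^d\frac{\alpha_l}{V}\cdot\frac{1}{\abs{\Omega_l}}\int_{\Omega_l}\abs{f}^p\leq\sum_{l=1}^d\frac{\alpha_l}{V}\norm{f}_{p,\alpha_l}^p,
\end{align*}
where the first equality uses that the discarded set is null; letting $\epsilon\downarrow0$ finishes the proof.

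To prove the decomposition, I would slice $\Omega$ by hyperplanes orthogonal to the first coordinate axis. First observe that $\Omega$, being open, convex and of finite volume, is bounded: if $\Omega$ were unbounded it would contain a point $y$ arbitrarily far from a fixed ball $B\subset\Omega$, and by convexity the convex hull of $B$ with $y$, which has volume tending to $\infty$ as $\abs{y}\to\infty$, would lie in $\Omega$, contradicting $\abs{\Omega}<\infty$. Now set $\phi(t)\vcentcolon=\abs{\Omega\cap\{x\in\R^N\mid x_1<t\}}$; then $\phi$ is non-decreasing, equals $0$ for $t$ below and $V$ for $t$ above the (finite) range of the first coordinate on $\Omega$, and is continuous because $\phi(t+h)-\phi(t)$ is the volume of $\Omega$ intersected with a slab of width $\abs{h}$, which tends to the volume of a single hyperplane section of $\Omega$ — that is, to $0$ — as $h\to0$. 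By the intermediate value theorem there are $t_0<t_1<\cdots<t_d$ with $\phi(t_0)=0$, $\phi(t_d)=V$ and $\phi(t_l)=\alpha_1+\cdots+\alpha_l$ (strictness of the inequalities $t_{l-1}<t_l$ comes from $\alpha_l>0$ together with monotonicity of $\phi$). Then $\Omega_l\vcentcolon=\Omega\cap\{x\in\R^N\mid t_{l-1}<x_1<t_l\}$ is open and convex as an intersection of open convex sets, the $\Omega_l$ are pairwise disjoint, $\abs{\Omega_l}=\phi(t_l)-\phi(t_{l-1})=\alpha_l$, and $\Omega\setminus\bigcup_{l}\Omega_l$ lies in the union of the finitely many hyperplanes $\{x_1=t_l\}$, $l=1,\ldots,d-1$, hence is null.

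The only part of this that is more than bookkeeping — and thus the main obstacle — is the slicing lemma, and inside it the easy but non-vacuous verification that $\phi$ is continuous with the stated boundary behaviour; everything else is the word-for-word analogue of Lemma \ref{inequality}. I note in passing that the right-hand side above is automatically finite: translating an interior point of a competitor for $\norm{f}_{p,W}$ to the origin and dilating by $(V/W)^{1/N}\geq1$ shows $\norm{f}_{p,W}\leq(V/W)^{1/p}\norm{f}_{p,V}$ whenever $W<V$, in particular for $W=\alpha_l\leq V$. This is not needed, however, since the asserted inequality is trivially true when its right-hand side is $+\infty$.
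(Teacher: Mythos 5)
Your proof is correct and takes essentially the same route as the paper's: slice $\Omega$ by hyperplanes orthogonal to the $e_{1}$-axis, use continuity of the volume function $\phi$ and the intermediate value theorem to place the cuts so that the pieces have volumes $\alpha_{l}$, note that the leftover set lies in finitely many hyperplanes and is hence null, and then bound each piece by the corresponding $\norm{f}_{p,\alpha_{l}}^{p}$. The only cosmetic differences are that you place all $d-1$ cuts at once where the paper iterates a two-piece split, and that you add the harmless but unneeded observations that $\Omega$ is bounded and that the right-hand side is finite.
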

\begin{proof}
	\textbf{Step 1}
		Let $\epsilon>0$ and $\Omega\subset \R^{N}$ be open and connected such that $\norm{f}_{p,V}^{p}-\epsilon\leq \frac{1}{\abs{\Omega}}\int_{\Omega}\abs{f(x)}^{p}\,\dd x$.
		According to Step 2 there is a hyperplane that cuts $\Omega$ into two convex sets $\Omega_{1}$ and $\tilde\Omega_{1}$ with $\abs{\Omega_{1}}=\alpha_{1}$ and 
		$\abs{\tilde\Omega_{1}}=\sum_{l=2}^{d}\alpha_{l}$. Now iterating this procedure with $\tilde\Omega_{1}$ we finally obtain $d$ open convex sets $\Omega_{i}$ with $\abs{\Omega_{i}}=\alpha_{i}$
		and $\Omega=\bigcup_{l=1}^{d}\Omega_{i}\cup A$, where $\HM^{N}(A)=0$, as $A$ is contained in a finite union of hyperplanes. Therefore
		\begin{align*}
			\norm{f}_{p,V}^{p}-\epsilon\leq \frac{1}{\abs{\Omega}}\int_{\Omega}\abs{f(x)}^{p}\,\dd x
			=\frac{1}{\abs{\Omega}}\sum_{l=1}^{d}\int_{\Omega_{i}}\abs{f(x)}^{p}\,\dd x
			\leq \frac{1}{\abs{\Omega}}\sum_{l=1}^{d}\alpha_{l}\norm{f}_{p,\alpha_{l}}^{p}.
		\end{align*}
	\textbf{Step 2}
		Let $\Omega$ be an open convex set and $\alpha+\beta=\abs{\Omega}$ for $\alpha,\beta>0$. If we define
		\begin{align*}
			\Omega_{s}\vcentcolon=\{ x\in\Omega \mid x_{1}=s \}
		\end{align*}
		we know by Cavalieri's Principle (a special case of Fubini's Theorem) that
		the function $\phi(t)\vcentcolon=\int_{-\infty}^{t}\abs{\Omega_{s}}\,\dd s$ measures the volume of the set $\Omega$ intersected with the open half-space below (in $e_{1}$ direction) the 
		hyperplane $\{x\in\R^{N}\mid x_{1}=t\}$. Additionally $\lim_{t\to -\infty}\phi(t)=0$, $\lim_{t\to\infty}\phi(t)=\abs{\Omega}$ and $\phi$ is continuous. Therefore the intermediate value
		theorem yields the existence of a $t_{0}$ such that $\Omega_{1}=\bigcup_{t\in (-\infty,t_{0})}\Omega_{t}$ and $\tilde\Omega_{1}=\bigcup_{t\in (t_{0},\infty)}\Omega_{t}$ are open convex sets
		(as intersection of two such sets) and $\abs{\Omega_{1}}=\alpha$ and $\abs{\tilde\Omega_{1}}=\abs{\Omega}-\alpha=\beta$.
\end{proof}

\begin{lemma}[Norms $\norm{\cdot}_{p,d\cdot V}$ of multiples of $V$ are ordered]\label{normsordered2}
	Let $p\in [1,\infty)$, $V>0$ and $d\in\N$. Then
	\begin{align*}
		\norm{f}_{p,d\cdot V}\leq \norm{f}_{p,V}\quad\text{for all }f\in L_{V}^{p}.
	\end{align*}
\end{lemma}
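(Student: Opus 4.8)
The plan is to deduce this directly from the inequality for norms in Lemma \ref{inequality2}, exactly mirroring how the first item of Lemma \ref{normsordered} followed from Lemma \ref{inequality} in the discrete setting. The key observation is that the total volume $d\cdot V$ can be split into $d$ equal pieces of volume $V$.

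Concretely, I would apply Lemma \ref{inequality2} with the roles reassigned so that the total volume is $d\cdot V$ and the decomposition is $\alpha_1=\alpha_2=\dots=\alpha_d=V$, which is legitimate since each $\alpha_l>0$ and $\sum_{l=1}^{d}\alpha_l = dV$. This yields
\begin{align*}
	\norm{f}_{p,d\cdot V}^{p} \leq \sum_{l=1}^{d}\frac{V}{dV}\norm{f}_{p,V}^{p}
	= \frac{1}{d}\sum_{l=1}^{d}\norm{f}_{p,V}^{p}
	= \norm{f}_{p,V}^{p}.
\end{align*}
Taking $p$-th roots (which is monotone on $[0,\infty)$) gives $\norm{f}_{p,d\cdot V}\leq \norm{f}_{p,V}$, which is the claim. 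As a byproduct, the chain of inequalities shows $\norm{f}_{p,d\cdot V}<\infty$ whenever $f\in L_V^{p}$, so $L_V^{p}\subseteq L_{d\cdot V}^{p}$ and the statement is meaningful as written.

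I do not expect any real obstacle here: the lemma is essentially a corollary of Lemma \ref{inequality2}. The only point requiring a little care is bookkeeping — in Lemma \ref{inequality2} the symbol $V$ denotes the \emph{total} volume of the decomposition, so one must apply it with total volume $dV$ and constituent volumes all equal to $V$, rather than reading $V$ literally as the same $V$ appearing in the present statement. Once that substitution is made correctly, the computation above is immediate.
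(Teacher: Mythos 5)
Your proof is correct and is exactly the paper's argument: the paper likewise obtains the lemma by applying Lemma \ref{inequality2} with all $\alpha_{l}=V$ (so the total volume there is $dV$), and your explicit computation and remark about the bookkeeping of the symbol $V$ just spell out what the paper leaves implicit.
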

\begin{proof}
	If we chose all $\alpha_{l}=V$ in Lemma \ref{inequality2} we directly obtain the result.
\end{proof}

\begin{lemma}[For $N=1$, $T<S$ and $T$ no factor of $S$ the norms are not ordered]\label{nonordered}
	Let $N=1$, $p\in [1,\infty)$ and $T,S>0$, $T<S$ such that $T$ is no factor of $S$. Then there are $f,g\in L_{T}^{p}$ such that 
	\begin{align*}
		\norm{f}_{p,T}<\norm{f}_{p,S}\quad\text{and}\quad
		\norm{g}_{p,S}<\norm{g}_{p,T}.
	\end{align*}
\end{lemma}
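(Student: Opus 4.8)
The plan is to transport the extremal configuration from the discrete setting (second item of Lemma~\ref{normsordered}) to the real line. Since $N=1$, every open convex set is an open interval, so $\norm{f}_{p,V}$ is simply the supremum of $\bigl(\frac1V\int_I\abs{f}^{p}\,\dd x\bigr)^{1/p}$ over all intervals $I$ of length $V$. I would test with periodic indicator functions: one of period $T$ to make $\norm{\cdot}_{p,T}$ small, and one of period $S$ to make $\norm{\cdot}_{p,S}$ small. As a preliminary step I would write $S=qT+r$ with $q=\lfloor S/T\rfloor\ge 1$ and $r=S-qT\in(0,T)$, which is possible precisely because $T<S$ and $T$ is not a factor of $S$; the whole argument hinges on the two elementary inequalities $T>r$ and $T<S$.

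For the first claim I would take $f\vcentcolon=\mathbf{1}_{E}$ with $E\vcentcolon=\bigcup_{k\in\Z}[kT,kT+r)$. Since $f$ is bounded it lies in $L_{T}^{p}$, and $\abs{f}^{p}=\mathbf{1}_{E}$. Because $\mathbf{1}_{E}$ has period $T$, every interval $I$ of length $T$ satisfies $\int_{I}\abs{f}^{p}\,\dd x=\int_{0}^{T}\mathbf{1}_{E}\,\dd x=r$, so I get $\norm{f}_{p,T}^{p}=r/T$ \emph{exactly}. For the scale $S$ I would only need one good interval: $[0,S)$ contains the $q+1$ pairwise disjoint pieces $[kT,kT+r)$, $k=0,\dots,q$ (the last one being $[qT,S)$), hence $\int_{0}^{S}\abs{f}^{p}\,\dd x=(q+1)r$ and $\norm{f}_{p,S}^{p}\ge (q+1)r/(qT+r)$. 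Comparing, $\norm{f}_{p,T}^{p}<\norm{f}_{p,S}^{p}$ reduces to $(q+1)T>qT+r$, i.e.\ $T>r$, which holds.

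For the second claim I would take $g\vcentcolon=\mathbf{1}_{F}$ with $F\vcentcolon=\bigcup_{k\in\Z}[kS,kS+T)$, whose translates are disjoint since $T<S$. Again $g\in L_{T}^{p}$ and $\abs{g}^{p}=\mathbf{1}_{F}$ has period $S$, so $\norm{g}_{p,S}^{p}=\frac1S\int_{0}^{S}\mathbf{1}_{F}\,\dd x=T/S$ exactly, while the interval $[0,T)\subseteq F$ already forces $\norm{g}_{p,T}^{p}\ge 1>T/S$. This gives $\norm{g}_{p,S}<\norm{g}_{p,T}$, and by symmetry of the two constructions one could also note the roles of $T$ and $S$ are being played in mirror fashion.

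I do not expect a serious obstacle; the one point needing care is justifying that the suprema defining $\norm{f}_{p,T}$ and $\norm{g}_{p,S}$ are genuinely \emph{equal} to (not merely bounded below by) the per-period averages $r/T$ and $T/S$, since it is exactly these equalities that keep the respective norms small. Both follow from the identity $\int_{a}^{a+L}h\,\dd x=\int_{0}^{L}h\,\dd x$ for an $L$-periodic integrand $h$. Everything else is bookkeeping with interval endpoints, and the argument is uniform in $p\in[1,\infty)$ because the test functions take only the values $0$ and $1$, so $\abs{f}^{p}=f$ and $\abs{g}^{p}=g$.
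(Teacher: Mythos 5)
Your proof is correct and follows essentially the same strategy as the paper's: concentrate the mass of $f$ into $\lfloor S/T\rfloor+1$ packets so that one window of length $S$ captures all of them while every window of length $T$ captures only one packet's worth, the decisive inequality in both cases reducing to $S<(\lfloor S/T\rfloor+1)T$, which holds precisely because $T$ is not a factor of $S$. The only real difference is cosmetic: the paper uses $\lfloor S/T\rfloor+1$ bump functions of unit $L^{p}$ mass spaced $T+\epsilon$ apart, whereas you use a $T$-periodic indicator function, which has the minor advantage that periodicity gives the value of $\norm{f}_{p,T}$ exactly without any spacing argument.
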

\begin{proof}
	Let $T<S$  such that $T$ is not a factor of $S$, i.e. $\frac{S}{T}\not\in\N$. Write $d\vcentcolon=\lfloor \frac{S}{T} \rfloor$. Then it is possible to choose $\epsilon>0$ such that 
	$(d+1)\epsilon<S-dT$. Let $\psi_{\epsilon}(x)$ be a bump function of $L^{p}$ mass $1$ with support in $(0,\epsilon)$. Then $f(x)=\sum_{i=0}^{d}\psi_{\epsilon}(x-i(T+\epsilon))$ is an $L_{T}^{p}$ function and
	\begin{align*}
		\norm{f}_{p,T}^{p}=\frac{1}{T}<  \frac{d+1}{S}=\norm{f}_{p,S}^{p},
	\end{align*}
	because $\frac{S}{T}<\lfloor \frac{S}{T} \rfloor+1$. On the other hand, $g=\psi_{\epsilon}\in L_{T}^{p}$ yields
	\begin{align*}
		\norm{g}_{p,S}^{p}=\frac{1}{S}<\frac{1}{T}=\norm{g}_{p,T}^{p}.
	\end{align*} 
\end{proof}

\begin{corollary}[Counterexamples cannot be too bad]
	Let $p\in [1,\infty)$, $T<S$. Then for all $f\in L_{S}^{p}$ holds
	\begin{align*}
		\norm{f}_{p,S}^{p}\leq\frac{(\lfloor\frac{S}{T}\rfloor+1)T}{S}\norm{f}_{T}^{p}\leq 2\norm{f}_{T}^{p}.
	\end{align*}
\end{corollary}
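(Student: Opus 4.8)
The strategy is to mirror the proof of the discrete version (Lemma ``Counterexamples cannot be too bad''), the only new ingredient being a geometric substitute for widening an index window. Write $d\vcentcolon=\lfloor\frac{S}{T}\rfloor$. From $d\leq\frac{S}{T}<d+1$ we get $S<(d+1)T$, and from $1<\frac{S}{T}$ (which holds since $T<S$) we get $d+1\leq\frac{S}{T}+1<\frac{2S}{T}$, so that $\frac{(\lfloor S/T\rfloor+1)T}{S}=\frac{(d+1)T}{S}<2$; hence the second inequality of the corollary follows once the first is established. Since $f\in L^{p}_{S}$ we have $\norm{f}_{p,S}<\infty$, so for every $\epsilon>0$ we may pick an open convex set $\Omega\subset\R^{N}$ with $\abs{\Omega}=S$ and
\begin{align*}
	\norm{f}_{p,S}^{p}-\epsilon\leq\frac{1}{\abs{\Omega}}\int_{\Omega}\abs{f(x)}^{p}\,\dd x.
\end{align*}

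Next I would enlarge $\Omega$ to an open convex set $\Omega'$ with $\Omega\subseteq\Omega'$ and $\abs{\Omega'}=(d+1)T$. Observe first that $\Omega$ is bounded: a nonempty open convex subset of $\R^{N}$ of finite Lebesgue measure is bounded (for $N\geq 2$ an unbounded one would contain, along a recession direction, cross-sections whose $(N-1)$-dimensional volumes stay bounded away from $0$, forcing infinite measure; for $N=1$ it is an interval). Consequently the open convex neighbourhoods $\Omega_{r}\vcentcolon=\{x\in\R^{N}\mid\dist(x,\Omega)<r\}$ --- Minkowski sums of $\Omega$ with open balls, hence open and convex --- have finite measure, $r\mapsto\abs{\Omega_{r}}$ is continuous (for convex bodies it is even a polynomial in $r$ by Steiner's formula) with $\abs{\Omega_{r}}\to\abs{\Omega}=S$ as $r\to 0^{+}$ and $\abs{\Omega_{r}}\to\infty$ as $r\to\infty$. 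Since $S<(d+1)T$, the intermediate value theorem produces $r_{0}>0$ with $\abs{\Omega_{r_{0}}}=(d+1)T$; put $\Omega'\vcentcolon=\Omega_{r_{0}}$.

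We may assume $\norm{f}_{p,T}<\infty$, i.e.\ $f\in L^{p}_{T}$, since otherwise the asserted bound is trivial. Using $\Omega\subseteq\Omega'$ together with $\abs{f}^{p}\geq 0$, the fact that $\Omega'$ is an admissible competitor in the definition of $\norm{\cdot}_{p,(d+1)T}$, and Lemma \ref{normsordered2} with the integer $d+1$, we obtain
\begin{align*}
	\norm{f}_{p,S}^{p}-\epsilon
	\leq\frac{1}{\abs{\Omega}}\int_{\Omega}\abs{f}^{p}\,\dd x
	\leq\frac{\abs{\Omega'}}{\abs{\Omega}}\cdot\frac{1}{\abs{\Omega'}}\int_{\Omega'}\abs{f}^{p}\,\dd x
	\leq\frac{(d+1)T}{S}\,\norm{f}_{p,(d+1)T}^{p}
	\leq\frac{(d+1)T}{S}\,\norm{f}_{p,T}^{p}.
\end{align*}
Letting $\epsilon\to 0$ gives the first inequality, and the estimate $\frac{(d+1)T}{S}<2$ recorded above gives the second.

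I expect the construction of $\Omega'$ to be the only non-routine point: in the discrete setting one simply widens the window from length $m$ to length $(d+1)n$, whereas here one must exhibit an open convex set of \emph{exactly} the prescribed volume $(d+1)T$ still containing $\Omega$, which is why a boundedness remark and a continuity/intermediate-value argument for $r\mapsto\abs{\Omega_{r}}$ are needed. The remaining ingredients --- the near-optimal $\Omega$, monotonicity of the integral, Lemma \ref{normsordered2}, and the arithmetic bound $(d+1)T/S<2$ --- are immediate.
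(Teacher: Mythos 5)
Your proof is correct and follows essentially the same route as the paper: take a near-optimal convex $\Omega$ of volume $S$, enlarge it to a convex set of volume exactly $(d+1)T$ via an intermediate-value argument, and conclude with Lemma \ref{normsordered2}. The only (cosmetic) difference is that you enlarge $\Omega$ by Minkowski sums with balls, while the paper takes the convex hull of $\Omega$ with a point moving along a normal; your version also spells out the boundedness of $\Omega$ and the arithmetic for the factor $2$, which the paper leaves implicit.
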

\begin{proof}
	Choose $d\vcentcolon= \lfloor\frac{S}{T}\rfloor$ and $\epsilon>0$ such that $\norm{f}_{p,S}^{p}-\epsilon\leq \frac{1}{S}\int_{\Omega}\abs{f(t)}^{p}\,\dd t$. Then we can
	extend $\Omega$ convexly to an open $\tilde\Omega$ with $\Omega\subset \tilde\Omega$ and $\abs{\tilde\Omega}=(d+1)T$. This can for example be done by taking the convex hull $\tilde\Omega_{y}$
	of $\Omega$ and a point $y$ moving along a normal at a fixed point $y\in\partial \Omega$. By an argument similar to that in the proof of Lemma \ref{inequality2} Step 2, we see that $x$ can 
	be chosen in such a way that $\abs{\tilde\Omega_{y}}=(d+1)T$. Now
	\begin{align*}
		\norm{f}_{p,S}^{p}-\epsilon\leq \frac{1}{S}\int_{\Omega}\abs{f(t)}^{p}\,\dd t
		\leq\frac{1}{S}\int_{\tilde\Omega}\abs{f(t)}^{p}\,\dd t
		\leq\frac{(d+1)T}{S}\norm{f}_{(d+1)T}^{p}
		\leq\frac{(d+1)T}{S}\norm{f}_{T}^{p}
	\end{align*}
	by Lemma \ref{normsordered2}.
\end{proof}

\bibliography{/Users/sebastianscholtes/Documents/library.bib}{}
\bibliographystyle{amsalpha}

\end{document}